\documentclass[12pt,letterpaper]{amsart}
\usepackage{lipsum}
\usepackage[scr=rsfs]{mathalpha}
\usepackage{xspace}
\usepackage{amsfonts}
\usepackage{amsthm}
\usepackage{amssymb}
\usepackage{times} 
\usepackage{graphicx}
\usepackage{hyperref}
\usepackage{tikz}
\usepackage{comment}
\usetikzlibrary{arrows}
\usepackage[all]{xy}
\usepackage{tikz-cd}
\usepackage{enumerate}
\usepackage{mathbbol}
\usepackage{verbatim}
\usepackage{mathtools}

\date{}
\usepackage{mathtools}

\newtheorem{thm}{Theorem}

\newtheorem*{thm*}{Main Theorem}
\newtheorem*{lemma*}{Key Lemma}
\tikzset{node distance=5cm, auto}

\newtheorem{lemma}[thm]{Lemma}

\newtheorem{corollary}[thm]{Corollary}

\newtheorem{remark}[thm]{Remark}

\makeatletter
\newcommand{\etale}{\'etal\@ifstar{\'e}{e\xspace}}
\makeatother
\usepackage[OT2,T1]{fontenc}
\DeclareSymbolFont{cyrletters}{OT2}{wncyr}{m}{n}
\DeclareMathSymbol{\Sha}{\mathalpha}{cyrletters}{"58}
\usepackage{amssymb}

\begin{document}
\title[${u}$-invariants of perfectoid fields]{Which numbers are  $u$-invariants of perfectoid fields?}
\author{Eknath Ghate}
\address{School of Mathematics\\
Tata Institute of Fundamental Research\\
Mumbai\\
India}
\email{eghate@math.tifr.res.in}
\author{Srinivasan Srimathy}
\address{School of Mathematics\\
Tata Institute of Fundamental Research\\
Mumbai\\
India}
\email{srimathy@math.tifr.res.in}
\begin{abstract}
    Let $p\neq 2$ be a prime. We determine the set of numbers that occur as  the $u$-invariant of perfectoid subfields of $\mathbb{C}_p$. Given any number that is the $u$-invariant of a field of characteristic $p$,  we give an explicit construction of a  perfectoid field  with the same $u$-invariant. In particular, if a number is the $u$-invariant of  a field of characteristic $p$, it is also the $u$-invariant of a field of characteristic zero.
\end{abstract}
\maketitle
\section{Introduction}
All the quadratic forms considered here are non-degenerate and none of the fields  in this paper are  of characteristic $2$.   For a field $F$, the $u$-\emph{invariant} of $F$, denoted $u(F)$,  is defined to be the largest dimension of an anisotropic  quadratic form over $F$.  The  set of natural numbers that can be realized as the  $u$-invariant of some field is not completely known. Here is a brief history. Merkurjev established the existence of fields with any even $u$-invariant (\cite{merkurjev}). On the other hand, the numbers $3$, $5$, or $7$ cannot occur as the $u$-invariants of any field (\cite[Chapter XI, Proposition 6.8]{lam}). The situation for odd values was further clarified when Izhboldin  constructed a field of $u$-invariant $9$ (\cite{izhboldin}), and Vishik  later extended this by proving the existence of fields with $u$-invariant $2^r + 1$ for all $r \geq 3$ (\cite{vishik}). There has been some very recent progress in this direction due to Karpenko   where he shows that any number not of the form $2^r-1$ or $2^r-3$ is the $u$-invariant of some field (\cite{karpenko2}). Looking across different characteristics, it is an open question  whether a number is the $u$-invariant of a field of characteristic zero if and only if it is the $u$-invariant of a field of characteristic $p$ for some (or every) $p$. In this paper, we prove the "if" direction. In fact, the characteristic zero fields that we construct for this purpose are perfectoids.\\
 \indent Let $K$ be a Henselian valued field with residue field $k$ of characteristic $p \neq 2$ and value group $\Gamma_K$ (in additive notation).  Then it is well known (see for example, \cite[Theorem 4]{pumplun}) that 
    \begin{align}\label{eqn:uinv}
        u(K) = |\Gamma_K/2\Gamma_K| u(k).
    \end{align}

A \emph{perfectoid field} (\cite[Definition 3.1]{scholze}) is a complete non-archimedean field $K$ of residue
characteristic $p>0$  whose associated rank-$1$-valuation is non-discrete, such that the Frobenius 
\begin{align*}
    \mathrm{Fr}: \mathcal{O}_K/p &\rightarrow \mathcal{O}_K/p\\
    x &\mapsto x^p
\end{align*}
 is surjective. 

In this note, we study the $u$-invariant of perfectoid fields.  From (\ref{eqn:uinv}), computing $u$-invariant of a perfectoid field $K$  reduces to computing  $|\Gamma_K/2\Gamma_K|$   and $u(k)$.  As a consequence,  we get:
\begin{remark} \label{rmk:tilt} \normalfont
    The $u$-invariant of a perfectoid field and its tilt are the same as they share the same value group and residue field (\cite[Lemma 3.4 (iii), Proposition 3.6]{scholze}).
\end{remark}
 While it is direct to compute the $u$-invariant of a perfectoid field using (\ref{eqn:uinv}), it is an open question whether there is a perfectoid whose $u$-invariant is the given number.  In this paper, we show that given any number $n$ that is the $u$-invariant of a field of characteristic $p$ for \emph{some} $p\neq 2$, there exists a perfectoid field of characteristic zero whose $u$-invariant is $n$. Our proof is constructive and we provide an explicit construction of a perfectoid field for each such  $n$.  En route to this main result, we also determine the precise set of numbers that occur as $u$-invariants  of perfectoids contained in $\mathbb{C}_p$.

\section{Perfectoids contained in $\mathbb{C}_p$}
Let $\mathbb{C}_p$ denote the completion of the algebraic closure of the $p$-adic numbers. We begin with some examples:

\begin{enumerate}
    \item Let $\mathbf{K = \mathbb{C}_p}$: \\
    We have $\Gamma_K = \mathbb{Q}$, $|\Gamma_K/2\Gamma_K|=1$ and $u(\overline{\mathbb{F}_p}) =1 $. So $u(K) =1$. One can also directly see this since the $u$-invariant of any algebraically closed field is $1$. 
     \item Let $\mathbf{K = \widehat{\mathbb{Q}_{p}(p^{1/p^{\infty}}}})$:\\
    We have $\Gamma_K = \mathbb{Z}[1/p]$, $\Gamma_K/2\Gamma_K=\mathbb{Z}/2\mathbb{Z}$ and $u(k)=u(\mathbb{F}_p) =2$. So $u(K) =4$.
    \item Let $\mathbf{K = \hat{L}_{\infty}}$: (\cite[Proposition 1.4.12]{schneider})\\
    We have $\Gamma_K = \mathbb{Z}[1/p]$, $\Gamma_K/2\Gamma_K=\mathbb{Z}/2\mathbb{Z}$ and $u(k)=u(\mathbb{F}_q) =2$. So $u(K) =4$
  \end{enumerate}

\noindent To compute the $u$-invariants of an arbitrary perfectoid contained in $\mathbb{C}_p$, we start with the following lemma:
\begin{lemma}\label{lem:subQ}
   Let $\Gamma$ be a subgroup of  $(\mathbb{Q},+)$. Then $\Gamma/2\Gamma$ is either trivial or isomorphic to $\mathbb{Z}/2\mathbb{Z}$.
\end{lemma}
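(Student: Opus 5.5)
The plan is to use the 2-adic valuation to split into two cases. Assume $\Gamma \neq 0$; otherwise $\Gamma/2\Gamma$ is trivial and we are done. For $x \in \mathbb{Q}^\times$ write $v_2(x)$ for its $2$-adic valuation, and consider the set $S = \{v_2(x) : x \in \Gamma,\ x \neq 0\} \subseteq \mathbb{Z}$.

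\emph{Case 1: $S$ is unbounded below.} We show $\Gamma = 2\Gamma$, so the quotient is trivial. Take $x \in \Gamma$ nonzero and choose $y \in \Gamma$ with $v_2(y) < v_2(x)$. Writing $x = a/b$ and $y = c/d$ in lowest terms, we want to express $x/2$ as an integer combination of $x$ and $y$. This holds if the subgroup $\langle x, y\rangle$ is cyclic, generated by some $g$ with $v_2(g) \le v_2(y) < v_2(x)$, for then $x = mg$ with $m$ even. This is the key step. Every finitely generated subgroup of $\mathbb{Q}$ is cyclic (clear denominators to land in $\mathbb{Z}$). The generator $g$ satisfies $v_2(g) = \min(v_2(x), v_2(y)) = v_2(y)$, since $g$ is an integer combination of $x$ and $y$ and both are integer multiples of $g$. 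Hence $m = x/g$ has $v_2(m) > 0$, so $m$ is even and $x = 2\cdot\frac{m}{2}g \in 2\Gamma$.

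\emph{Case 2: $S$ is bounded below.} Let $r = \min S$, attained by some $x_0 \in \Gamma$. We claim the map $\phi: \Gamma \to \mathbb{Z}/2\mathbb{Z}$ given by $x \mapsto [v_2(x) = r]$ (that is, $1$ if $v_2(x) = r$ and $0$ otherwise, with $\phi(0) = 0$) is a surjective homomorphism with kernel $2\Gamma$. A cleaner way to phrase this is that $x \mapsto x/2^r \in \mathbb{Z}_{(2)}$, followed by reduction mod $2$, gives a homomorphism $\Gamma \to \mathbb{F}_2$. It is additive, since $\Gamma \cdot 2^{-r} \subseteq \mathbb{Z}_{(2)}$ and reduction $\mathbb{Z}_{(2)} \to \mathbb{F}_2$ is additive. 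It is surjective because of $x_0$.

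For the kernel: clearly $2\Gamma$ lies in the kernel. Conversely, suppose $v_2(x) > r$. Apply the cyclic-subgroup argument to $\langle x, x_0\rangle = \langle g\rangle$ with $v_2(g) = r$. Then $x = mg$ with $m$ even, so $x \in 2\Gamma$. Hence $\Gamma/2\Gamma \cong \mathbb{Z}/2\mathbb{Z}$.

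The main obstacle is the valuation claim for the generator of $\langle x, y\rangle$. This is routine once one notes that $v_2(g) \ge \min(v_2(x), v_2(y))$ because $g = ix + jy$ with $i, j \in \mathbb{Z}$, and $v_2(g) \le v_2(x), v_2(y)$ because $x, y \in \mathbb{Z}g$.
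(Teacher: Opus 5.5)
Your proof is correct, but it is organized differently from the paper's.

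The paper splits on whether $\Gamma$ is $2$-divisible. If it is not, it fixes $a\in\Gamma\setminus 2\Gamma$ and, for an arbitrary $b\in\Gamma$, writes $ma=nb$ with $\gcd(m,n)=1$. If $n$ were even, $m$ would be odd and $a=nb-2ra\in 2\Gamma$, a contradiction. So $n$ is odd, and then $b=ma-2sb$ lies in $2\Gamma$ or in $a+2\Gamma$ according to the parity of $m$. That argument only shows there are at most two cosets, and it uses nothing beyond parity of coprime integers.

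You instead split on whether $v_2$ is bounded below on $\Gamma\setminus\{0\}$. In the bounded case you write down the quotient map explicitly as $x\mapsto x/2^r \bmod 2$ and check that its kernel is $2\Gamma$.

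The engine is the same in both proofs. Two rationals generate a cyclic group: the paper's relation $ma=nb$ is exactly $a=ng$, $b=mg$ for a generator $g$ of $\langle a,b\rangle$. In both, the parity of the cofactor decides membership in $2\Gamma$. You read that parity off from $2$-adic valuations, while the paper reads it off from coprimality.

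Your version gives an intrinsic criterion for which case occurs: $\Gamma=2\Gamma$ exactly when the $2$-adic valuations of its nonzero elements are unbounded below. It also gives an explicit isomorphism $\Gamma/2\Gamma\cong\mathbb{Z}/2\mathbb{Z}$ in the other case. The paper's version is shorter and avoids valuations altogether, which is all that is needed for the bound $|\Gamma_K/2\Gamma_K|\le 2$ used in Theorem \ref{thm:ucp}.
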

\begin{proof}
    Suppose $\Gamma$ is $2$-divisible then $\Gamma/2\Gamma$ is trivial. Else, there exists $a \in \Gamma\setminus  2\Gamma$. Now let $b\in \Gamma$ be any element. Since $a,b \in \mathbb{Q}$, there exists $m,n \in \mathbb{Z}$ with $(m,n)=1$ such that 
    \begin{align*}
        ma = nb,
    \end{align*}
    Suppose $n$ is even, then $m=2r+1$ is odd. This implies that $a = nb -2r a \in 2 \Gamma$ contradicting the hypothesis on $a$. Therefore $n = 2s+1$ is odd. Now we have, 
    \begin{align*}
        b=ma- 2sb \in \begin{cases} \Gamma  \text{~if $m$ is even}\\
        a + 2\Gamma \text{~else.}\end{cases}
    \end{align*}
    Therefore $\Gamma/2\Gamma \cong \mathbb{Z}/2\mathbb{Z}$.
\end{proof}

\begin{thm}\label{thm:ucp}
    Let $\mathbf{\mathbb{Q}_p \subseteq K \subseteq \mathbb{C}_p}$ be an arbitrary perfectoid. Then $u(K)\in\{1,2,4\}$ and each of these values is the $u$-invariant of a perfectoid in $\mathbb{C}_p$.
\end{thm}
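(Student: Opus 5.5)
We apply formula (\ref{eqn:uinv}) to $K$, which is complete, hence Henselian, with residue characteristic $p\neq 2$. Thus $u(K)=|\Gamma_K/2\Gamma_K|\,u(k)$, and it suffices to control the two factors separately.

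\emph{Value group.} The valuation on $\mathbb{C}_p$ is rank one and takes values in $\mathbb{Q}$. The valuation on $K$ is the restriction, so $\Gamma_K$ is a subgroup of $(\mathbb{Q},+)$. By Lemma \ref{lem:subQ}, $|\Gamma_K/2\Gamma_K|\in\{1,2\}$.

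\emph{Residue field.} The residue field $k$ of $K$ sits between $\mathbb{F}_p$ and $\overline{\mathbb{F}_p}$, the residue field of $\mathbb{C}_p$. So $k$ is an algebraic extension of $\mathbb{F}_p$, and we need $u(k)\in\{1,2\}$. Every finite field has $u$-invariant $2$ by Chevalley--Warning: forms of dimension $\geq 3$ are isotropic, and there is an anisotropic binary form $x^2-ay^2$ with $a$ a nonsquare. For an arbitrary algebraic extension $k/\mathbb{F}_p$, any form in three variables over $k$ is defined over a finite subfield, where it is isotropic, so $u(k)\le 2$. Moreover $u(k)=1$ exactly when $k$ is quadratically closed; otherwise a nonsquare $a$ gives the anisotropic form $\langle 1,-a\rangle$. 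Hence $u(k)\in\{1,2\}$, and therefore $u(K)\in\{1,2,4\}$.

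\emph{Realization.} The examples above already give $u(\mathbb{C}_p)=1$ and $u(\widehat{\mathbb{Q}_p(p^{1/p^\infty})})=4$. For the value $2$ we want $|\Gamma_K/2\Gamma_K|=1$ with $u(k)=2$; alternatively $\Gamma_K$ with index $2$ and $k$ quadratically closed would also work.

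The natural choice is $K=\widehat{\mathbb{Q}_p(p^{1/(2p)^\infty})}$, the completion of $\mathbb{Q}_p(p^{1/(2p)^n}:n\ge1)$. The extensions $\mathbb{Q}_p(p^{1/(2p)^n})$ are totally ramified, because $X^{(2p)^n}-p$ is Eisenstein. Hence the residue field is $\mathbb{F}_p$ and the value group is $\mathbb{Z}[1/2p]$, which is $2$-divisible. This gives $u(K)=1\cdot 2=2$.

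The main obstacle is checking that this $K$ is perfectoid. The value group is non-discrete, so what remains is surjectivity of Frobenius on $\mathcal{O}_K/p$. The plan is to mimic the standard argument for $\mathbb{Q}_p(p^{1/p^\infty})$. One shows that $\mathcal{O}_K/p$ is generated over $\mathbb{F}_p$ by the images of $\varpi_n=p^{1/(2p)^n}$, using that the ring of integers of each totally ramified layer is $\mathbb{Z}_p[\varpi_n]$. Since $\varpi_n=\varpi_{n+1}^{2p}=(\varpi_{n+1}^2)^p$, every generator is a $p$-th power, so Frobenius is surjective. Passing to the completion does not change $\mathcal{O}/p$. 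Alternatively, one can invoke the general fact that a complete field containing a perfectoid field, here $\widehat{\mathbb{Q}_p(p^{1/p^\infty})}$, and algebraic over it in the completed sense, is perfectoid; this follows from the tilting equivalence in \cite{scholze}.
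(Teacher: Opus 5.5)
Your proposal is correct and follows the paper's proof. It uses the same factorization via (\ref{eqn:uinv}), Lemma \ref{lem:subQ} for the value group, and $u(k)\le 2$ for algebraic extensions of $\mathbb{F}_p$; you spell out the Chevalley--Warning argument where the paper just cites the $C_1$ property. It also uses the same three witnesses, since your $\widehat{\mathbb{Q}_p(p^{1/(2p)^\infty})}$ is the paper's $\widehat{\mathbb{Q}_p(p^{1/2^{\infty}}, p^{1/p^{\infty}})}$ for a compatible choice of roots. The only difference is cosmetic: the single tower $\varpi_n=(\varpi_{n+1}^2)^p$ gives the $p$-th roots of the generators directly, in place of the paper's B\'ezout step $m\cdot 2^n + r\cdot p = 1$.
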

First we show that  the only possible $u$-invariants of $K$ are $1,2$ or $4$. Since residue $k$ of $K$ is an algebraic extension of $\mathbb{F}_p$, we have  $u(k)\leq 2$ (since $k$ is a $C_1$-field).  Therefore  by (\ref{eqn:uinv}), it suffices to show that  $|\Gamma_K/2\Gamma_K| \leq 2$. Since $\Gamma_{\mathbb{C}_p} = \mathbb{Q}$, $\Gamma_K$ is a subgroup of $(\mathbb{Q}, +)$.  The claim follows from Lemma \ref{lem:subQ}.\\
\indent Now we show that each of the values $\{1,2,4\}$ is the $u$-invariant of a perfectoid $K \subseteq \mathbb{C}_p$. The cases  $u(K)=1,4$ are already handled in the examples above. Consider the field
\begin{align*}
    K = \widehat{\mathbb{Q}_p(p^{1/2^{\infty}}, p^{1/p^{\infty}})}.
\end{align*}
By \cite[Chapter II, (4.3)]{neukirch}, $$\mathcal{O}_K/p \cong \mathbb{Z}_p[p^{1/2^{\infty}}, p^{1/p^{\infty}}]/p . $$
    Consider the Frobenius map
    \begin{align*}
        \mathrm{Fr}: \mathcal{O}_K/p  \rightarrow \mathcal{O}_K/p  .
    \end{align*}
    Since $\mathbb{Z}_p/p \cong \mathbb{F}_p$ is perfect and $\mathcal{O}_K/p $ is a ring of characteristic $p$, in order to  check surjectivity of $\mathrm{Fr}$, it suffices to check if $p^{1/2^n }$  has $p$-th root in the ring i.e., we just need to check if $p^{1/2^np } \in \mathcal{O}_K$ for every $n$. Since $p\neq 2$, there are $m, r \in \mathbb{Z}$ such that $m\cdot2^n + r\cdot p = 1$. Hence  
\begin{align*}
       p^{1/2^np }= p^{(m\cdot2^n + r\cdot p)/2^np} =  p^{m/p + r/2^n}\in \mathcal{O}_K .
    \end{align*}
The value group  of $K$ is $\Gamma_K \cong \mathbb{Z}[1/2, 1/p]$ which is dense in $(\mathbb{R}, +)$. Moreover, $|\Gamma_K/2\Gamma_K| = 1$ since $\Gamma_K$ is $2$-divisible. Therefore $K$ is a perfectoid with residue $\mathbb{F}_p$ and $u(K)=2$ by (\ref{eqn:uinv}).

\begin{remark}\normalfont
    From the above discussion it is clear that for a perfectoid field $K \subseteq \mathbb{C}_p$, $u(K) =1$ precisely when $\Gamma_K$ is $2$-divisible and the residue field $k$ is quadratically closed. Similarly, $u(K)=4$ precisely when $\Gamma_K$ is not $2$-divisible and $k$ is not quadratically closed. For the other cases, $u(K)=2$. 
\end{remark}

\section{Perfectoid fields with  given $u$-invariant} 
In this section, we show that any number that can be realized as the $u$-invariant of a field of characteristic $p\neq 2$ can be realized as the $u$-invariant of some perfectoid field.  For a field $\kappa$, let $\kappa^{\mathrm{perf}}$ denote the perfect closure of $\kappa$ and let $W(\kappa)$ denote the Witt ring of $\kappa$.
\begin{lemma}\label{lem:uperf}
Let $\kappa$ be a field of characteristic $p\neq 2$.  Then
\begin{enumerate}[(i)]
    \item the restriction map
    \begin{align*}
        r_{\kappa^{\mathrm{perf}}/\kappa}: W(\kappa) &\rightarrow W(\kappa^{\mathrm{perf}})\\
        q &\mapsto q\otimes_\kappa \kappa^{\mathrm{perf}}\nonumber
    \end{align*}
    is an isomorphism.
    \item $u(\kappa) = u(\kappa^{\mathrm{perf}})$.
\end{enumerate}
\end{lemma}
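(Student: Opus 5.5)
The plan is to exploit the fact that $\kappa^{\mathrm{perf}}/\kappa$ is a purely inseparable extension whose finite subextensions all have odd degree. Write $\kappa^{\mathrm{perf}}=\bigcup_{n\ge 0}\kappa^{1/p^n}$, an increasing union of fields with $[\kappa^{1/p^n}:\kappa]$ a power of $p$ (possibly infinite if $\kappa$ is not finitely generated). Every finite subextension of $\kappa^{\mathrm{perf}}/\kappa$ therefore has $p$-power degree, which is odd since $p\neq 2$.

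For (i), injectivity is the easy half. Let $q$ be anisotropic over $\kappa$ but isotropic over $\kappa^{\mathrm{perf}}$. An isotropic vector involves only finitely many elements, so it lives in a finite subextension $L/\kappa$ of odd degree. Springer's theorem (odd degree extensions do not make anisotropic forms isotropic, \cite{lam}) then gives a contradiction. Hence anisotropic forms stay anisotropic, so the kernel of $r_{\kappa^{\mathrm{perf}}/\kappa}$ is trivial.

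Surjectivity should be more direct than a transfer argument. Since $\mathrm{char}\,\kappa\neq 2$, every form over $\kappa^{\mathrm{perf}}$ diagonalizes as $\langle a_1,\dots,a_m\rangle$ with $a_i\in\kappa^{\mathrm{perf}}$. Each $a_i$ satisfies $a_i^{p^n}\in\kappa$ for some $n$. Because $p^n$ is odd, $a_i^{p^n}=a_i\cdot (a_i^{(p^n-1)/2})^2$, so $a_i$ and $a_i^{p^n}$ differ by a square. Therefore $\langle a_1,\dots,a_m\rangle\cong\langle a_1^{p^n},\dots,a_m^{p^n}\rangle$, which is extended from $\kappa$. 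In fact this shows more: the map $\kappa^\times/\kappa^{\times2}\to(\kappa^{\mathrm{perf}})^\times/(\kappa^{\mathrm{perf}})^{\times 2}$ is surjective, and it is injective by Springer applied to $\langle 1,-a\rangle$. So $r_{\kappa^{\mathrm{perf}}/\kappa}$ is an isomorphism.

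For (ii), take an anisotropic $q$ over $\kappa$ of dimension $d$. By the injectivity argument, $q\otimes\kappa^{\mathrm{perf}}$ is anisotropic of the same dimension, giving $u(\kappa)\le u(\kappa^{\mathrm{perf}})$. Conversely, any anisotropic form over $\kappa^{\mathrm{perf}}$ is, by the surjectivity argument, isometric to $q\otimes\kappa^{\mathrm{perf}}$ for some diagonal $q$ over $\kappa$ of the same dimension. This $q$ is anisotropic, since isotropy is preserved under extension. This gives the reverse inequality, including the infinite case.

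The main obstacle is the injectivity step, which depends on reducing to a finite, odd-degree subextension so that Springer's theorem applies. I will state that reduction carefully, noting that isotropy is witnessed over a finitely generated, hence finite, subextension of the algebraic extension $\kappa^{\mathrm{perf}}/\kappa$.
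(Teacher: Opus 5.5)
Your proposal is correct and is essentially the paper's proof. Surjectivity comes from each $a_i$ differing from $a_i^{p^n}\in\kappa$ by a square because $p^n$ is odd. Injectivity and (ii) come from descending an isotropic vector to a finite purely inseparable, hence odd-degree, subextension and applying Springer's theorem; the paper instead descends a hyperbolic equivalence and cites the Witt-ring form of Springer's theorem. Your argument for (ii) matches anisotropic forms of every dimension in both directions, rather than arguing by contradiction from an anisotropic form of dimension $u(\kappa)$, so it also covers $u(\kappa)=\infty$, which the paper's argument tacitly excludes.
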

\begin{proof}
    \begin{enumerate}[(i)]
        \item First we show surjectivity of the restriction map. Let $q= \langle a_1, a_2, \cdots, a_n\rangle \in W(\kappa^{\mathrm{perf}})$. Since each $a_i \in (\kappa^{\mathrm{perf}})^{\times}$, $a_i^{p^m}\in \kappa^{\times}$ for some $m$.   Since $p$ is odd, $a_i^{p^m}= a^{2r+1}= a_i^{2r}.a_i \in \kappa$ and hence $a_i = b_i c_i^{2}$ for some $b_i \in \kappa$ and $c_i \in (\kappa^{\mathrm{perf}})^{\times}$.  Therefore  $q = r_{\kappa^{\mathrm{perf}}/\kappa}(q')$  where $q'=\langle b_1,\cdots, b_n\rangle$. \\
        \indent To show injectivity,  suppose $q$ is an $n$-dimensional form over $\kappa$ such that $q \otimes_{\kappa} \kappa^{\mathrm{perf}}$ is  equivalent to a hyperbolic form.  Then  entries of the $n\times n$ matrix  giving this equivalence come from a finite  purely inseparable extension $\kappa'/\kappa$ and hence  $q \otimes_{\kappa} \kappa'$ is hyperbolic. Now the fact that $[\kappa':\kappa]$ is odd together with \cite[Chapter VII, Corollary 2.6]{lam} implies that $q$ is hyperbolic.
        
        \item The surjectivity of  the restriction map $r_{\kappa^{\mathrm{perf}}/\kappa}$ directly implies that $u(\kappa^{\mathrm{perf}}) \leq u(\kappa)$. Suppose $u(\kappa^{\mathrm{perf}}) < u(\kappa)$.  This implies that every non-degenerate quadratic form of dimension $u(\kappa)$ over $\kappa^{\mathrm{perf}}$ is isotropic.  Consider an anisotropic quadratic form $q$ over $\kappa$ of dimension $n:=u(\kappa)$. By the above argument, $q \otimes_{\kappa} \kappa^{\mathrm{perf}}$  is isotropic. So there exists $\mathbf{0} \neq \mathbf{a} \in (\kappa^{\mathrm{perf}})^n$ such that  $q(\mathbf{a}) =0$. The entries in the vector $\mathbf{a}$ come from a finite purely inseparable extension $\kappa'/\kappa$. Since $p$ is odd, $[\kappa':\kappa]$ is of odd degree and the above arguments imply that $q \otimes_{\kappa} \kappa'$ is isotropic. This contradicts Springer's theorem that any anisotropic form stays anisotropic over odd degree extensions (\cite[Chapter VII, Theorem 2.7]{lam}). Therefore $u(\kappa^{\mathrm{perf}}) = u(\kappa)$.
    \end{enumerate}
\end{proof}
The following theorem and its proof explicitly constructs a perfectoid field of characteristic zero whose $u$-invariant is the $u$-invariant of some field of positive characteristic.
\begin{thm}\label{thm:uperf}
    Let  $\kappa$ be any field  of  positive characteristic $p \neq 2$. Then there exists a perfectoid field $K$ of characteristic zero with $u(K) = u(\kappa)$. Moreover, one can construct $K$ in  such a way that the residue of $K$ is $\kappa^{\mathrm{perf}}$.
\end{thm}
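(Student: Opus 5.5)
The plan is to build $K$ as the untilt of a suitable perfectoid field of characteristic $p$ whose residue field is $\kappa^{\mathrm{perf}}$, then compute $u(K)$ with (\ref{eqn:uinv}) and Lemma \ref{lem:uperf}. Concretely, set $k:=\kappa^{\mathrm{perf}}$, a perfect field of characteristic $p$. Write $W(k)$ for its ring of $p$-typical Witt vectors; this is a complete discrete valuation ring with uniformizer $p$ and residue field $k$. (It should not be confused with the Witt ring of quadratic forms, and in the write-up I would use a different symbol to avoid the clash.) Let $K_0=W(k)[1/p]$ and take
\begin{align*}
K=\widehat{K_0(p^{1/2^\infty},p^{1/p^\infty})},
\end{align*}
mirroring the construction in the proof of Theorem \ref{thm:ucp}. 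Alternatively one could use $K_0(p^{1/p^\infty})$, which changes the value group from $\mathbb{Z}[1/2,1/p]$ to $\mathbb{Z}[1/p]$. The choice between these two depends on the target value; see below.

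The first step is to check that $K$ is perfectoid. It has characteristic zero, and its value group $\mathbb{Z}[1/2,1/p]$ (resp.\ $\mathbb{Z}[1/p]$) is non-discrete. For Frobenius surjectivity I will argue as in Theorem \ref{thm:ucp}:
\begin{align*}
\mathcal{O}_K/p\cong W(k)[p^{1/2^\infty},p^{1/p^\infty}]/p = k[p^{1/2^\infty},p^{1/p^\infty}]/(p).
\end{align*}
Since $k$ is perfect, every element of $k$ is a $p$-th power. Each generator $p^{1/2^n}$ also has a $p$-th root, by the B\'ezout trick $m2^n+rp=1$. Hence $\mathrm{Fr}$ is surjective. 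The residue field stays $k$ because the adjoined radicals of $p$ generate a totally ramified tower. I expect this to be the main technical point: justifying that $\mathcal{O}_K$ is the completion of $W(k)[p^{1/2^\infty},p^{1/p^\infty}]$ and that no residue extension occurs. I would handle it level by level. Each $K_0(p^{1/2^a p^b})$ is generated by a root of an Eisenstein polynomial, so its ring of integers is $W(k)[p^{1/2^ap^b}]$ with residue field $k$. Taking the union and then the completion preserves both facts.

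Next I compute the invariant, where $u(k)=u(\kappa)$ by Lemma \ref{lem:uperf}(ii). With the value group $\mathbb{Z}[1/2,1/p]$, which is $2$-divisible, (\ref{eqn:uinv}) gives $u(K)=u(k)=u(\kappa)$, as required. The other choice, $\mathbb{Z}[1/p]$, would give $2u(\kappa)$, so I commit to adjoining $p^{1/2^\infty}$ as well. One more detail: $K$ is complete, hence Henselian, so (\ref{eqn:uinv}) applies.

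Finally, if one wants a conceptual check, the tilt $K^\flat$ should be the completed perfection of $k((t))$ with $t^{1/2^\infty}$ adjoined. This is consistent with Remark \ref{rmk:tilt}. The construction above, however, already gives the claim directly, including the statement that the residue field of $K$ is $\kappa^{\mathrm{perf}}$.
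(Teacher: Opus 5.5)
Your proposal is correct and follows the paper's proof essentially verbatim. The paper also starts from the absolutely unramified complete DVR with residue field $\kappa^{\mathrm{perf}}$ (that is, $W(\kappa^{\mathrm{perf}})$), adjoins $p^{1/2^\infty}$ and $p^{1/p^\infty}$ to its fraction field, and completes. It then checks Frobenius surjectivity via the B\'ezout trick and concludes from the $2$-divisible value group $\mathbb{Z}[1/2,1/p]$, (\ref{eqn:uinv}) and Lemma \ref{lem:uperf}. Your level-by-level Eisenstein argument for the ring of integers and the residue field spells out what the paper takes from its citations of Serre and Neukirch.
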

\begin{proof}
    Let $R$ be the absolutely unramified complete discrete valuation ring with residue field $\kappa^{\mathrm{perf}}$. Such a ring exists and is unique upto isomorphism by \cite[Chapter II, \S 5, Theorem 3]{serre_local}. Let $F$ be the fraction field of $R$ so that $R= \mathcal{O}_F$. Let 
    \begin{align*}
    L &= F(p^{1/2^{\infty}}, p^{1/p^{\infty}})  \text{~~~and}\\
    K &= \hat{L}
    \end{align*}
By \cite[Chapter I, \S6(ii)]{serre_local},  $\mathcal{O}_L = \mathcal{O}_F[p^{1/2^{\infty}}, p^{1/p^{\infty}}]$. Moreover, by \cite[Chapter II, (4.3)]{neukirch}, $$\mathcal{O}_K/p \cong \mathcal{O}_F[p^{1/2^{\infty}}, p^{1/p^{\infty}}]/p . $$
    To show that $K$ is  a perfectoid, it suffices to show that  the Frobenius map
    \begin{align*}
        \mathrm{Fr}: \mathcal{O}_K/p  \rightarrow \mathcal{O}_K/p 
    \end{align*}
    is surjective. Since $\mathcal{O}_F/p \cong \kappa^{\mathrm{perf}}$ is perfect and $\mathcal{O}_K/p $ is a ring of characteristic $p$, in order to  check surjectivity of $\mathrm{Fr}$, it suffices to check if $p^{1/2^n}$  has $p$-th root in the ring for every $n$. The proof goes similar to the corresponding proof in Theorem \ref{thm:ucp}.\\
    \indent The value group  of $K$ is $\Gamma_K \cong \mathbb{Z}[1/2, 1/p]$  and $|\Gamma_K/2\Gamma_K| = 1$. It is easy to see that the residue field of $K$ is $\kappa^{\mathrm{perf}}$. Therefore by (\ref{eqn:uinv}) and Lemma \ref{lem:uperf},  $u(K) = u(\kappa^{\mathrm{perf}}) = u(\kappa)$. 
    \indent 
\end{proof}
The following corollaries easily follow from Theorem \ref{thm:uperf}.

\begin{corollary} \label{cor:char0}
    Given  any field $\kappa$ of characteristic $\neq 2$, there exists a complete non-discrete rank one valued field  $K$ of characteristic zero with $u(K) = u(\kappa)$.
\end{corollary}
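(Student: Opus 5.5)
The plan is to split into two cases according to the characteristic of $\kappa$. Since $\kappa$ has characteristic $\neq 2$, either $\mathrm{char}\,\kappa = p$ for some odd prime $p$, or $\mathrm{char}\,\kappa = 0$.

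\emph{Case $\mathrm{char}\,\kappa = p$ odd.} Here Theorem \ref{thm:uperf} does all the work. It produces a perfectoid field $K$ of characteristic zero with $u(K)=u(\kappa)$. By the definition of a perfectoid field, $K$ is complete and its associated rank-one valuation is non-discrete. So $K$ is a complete non-discrete rank one valued field of characteristic zero, as required.

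\emph{Case $\mathrm{char}\,\kappa = 0$.} I would imitate the construction in Theorem \ref{thm:uperf}, using a Laurent series field in place of a Witt vector ring so that equal characteristic is allowed. Take the rank one valued field $L = \bigcup_n \kappa((t^{1/2^n}))$, with the $t$-adic valuation extended to have value group $\mathbb{Z}[1/2]$, and let $K=\hat L$ be its completion.

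\begin{enumerate}[(i)]
    \item $K$ has characteristic zero, since it contains $\kappa$.
    \item $K$ is complete by construction.
    \item The value group is $\Gamma_K=\mathbb{Z}[1/2]$, because completion does not change the value group. This group is dense in $\mathbb{R}$, so the rank one valuation is non-discrete, and it is $2$-divisible, so $|\Gamma_K/2\Gamma_K|=1$.
    \item The residue field of $K$ is $\kappa$: each $\kappa((t^{1/2^n}))$ has residue field $\kappa$, so the union $L$ does too, and completion preserves it.
    \item $K$ is complete with respect to a rank one valuation, hence Henselian. Its residue field $\kappa$ has characteristic $0\neq 2$.
\end{enumerate}

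Then (\ref{eqn:uinv}) gives $u(K)=1\cdot u(\kappa)=u(\kappa)$.

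The main point to check is that (\ref{eqn:uinv}) applies in the equal characteristic zero case. The formula is stated for residue characteristic $p\neq 2$, but the cited result \cite[Theorem 4]{pumplun} only needs residue characteristic $\neq 2$, which holds here. Everything else is routine bookkeeping about value groups and residue fields.
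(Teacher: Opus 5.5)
Your proof is correct and matches the paper's argument. Positive characteristic is handled by Theorem \ref{thm:uperf}, since a perfectoid field is complete with a non-discrete rank one valuation. Characteristic zero is handled by completing $\kappa((t^{1/2^{\infty}}))$, which has $2$-divisible value group $\mathbb{Z}[1/2]$ and residue field $\kappa$, so (\ref{eqn:uinv}) gives $u(K)=u(\kappa)$. You also note that (\ref{eqn:uinv}) only needs residue characteristic $\neq 2$, not positive residue characteristic; the paper uses this without saying so.
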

\begin{proof}
     When characteristic of $\kappa$ is positive, the claim follows directly from Theorem \ref{thm:uperf}. Now suppose $\kappa$ is of characteristic zero. The field $K= \widehat{\kappa((x^{1/2^{\infty}}))}$  is a complete  non-discrete rank one valued  field with residue $\kappa$ and whose $\Gamma_K$ is $2$-divisible. Therefore  $u(K)=u(\kappa)$ by (\ref{eqn:uinv}). 
\end{proof}

\begin{corollary}
    For any  $u \in \mathbb{N}$ not of the form $2^r-1$ or $2^r-3$, there exist a perfectoid field of every characteristic $\neq 2$ whose $u$-invariant is $u$.
\end{corollary}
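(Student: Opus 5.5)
The plan is to combine Karpenko's realization result with Theorem \ref{thm:uperf}, Remark \ref{rmk:tilt} and the construction in Corollary \ref{cor:char0}. Fix $u\in\mathbb{N}$ not of the form $2^r-1$ or $2^r-3$. By \cite{karpenko2}, there is a field $\kappa_0$ with $u(\kappa_0)=u$.

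The first step is to secure fields with $u$-invariant $u$ in each characteristic $p\neq 2$ separately, not just in some characteristic. Karpenko's construction, like the earlier ones of Merkurjev, Izhboldin and Vishik, starts from an arbitrary base field of characteristic $\neq 2$. It then passes to a transfinite tower of function fields of quadrics and products of such function fields. I would therefore check that his theorem holds over any base field of characteristic $\neq 2$. I expect it does, since these arguments use only Chow groups and Steenrod operations modulo $2$. This gives, for each prime $p\neq 2$, a field $\kappa_p$ of characteristic $p$ with $u(\kappa_p)=u$. This uniformity in the characteristic is the main obstacle; everything else is formal. If it fails, I would look for a way to transport the $u$-invariant between characteristics via a Henselian valued field and (\ref{eqn:uinv}). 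For example, a characteristic-$0$ field with residue field of characteristic $p$ only lets one pass from characteristic $p$ to characteristic $0$. So I would rely on the statement of \cite{karpenko2} being characteristic-free.

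With $\kappa_p$ in hand, Theorem \ref{thm:uperf} gives a perfectoid field $K_p$ of characteristic zero, with residue field $\kappa_p^{\mathrm{perf}}$ and $u(K_p)=u(\kappa_p)=u$. Its tilt $K_p^\flat$ is a perfectoid field of characteristic $p$. By Remark \ref{rmk:tilt}, $u(K_p^\flat)=u(K_p)=u$. This realizes $u$ by perfectoid fields of characteristic $p$ for every odd prime $p$, and in characteristic zero as well.

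Finally, I interpret ``every characteristic $\neq 2$'' as covering characteristic $0$ and all odd primes $p$. Characteristic $0$ is already covered by $K_p$ for any odd $p$. Alternatively, one could directly take $\widehat{\mathbb{F}_p((t))(t^{1/2^\infty},t^{1/p^\infty})}$-style analogues with residue field $\kappa_p^{\mathrm{perf}}$. That is, one takes $\kappa_p^{\mathrm{perf}}((t))$ with all $2$-power and $p$-power roots of $t$ adjoined and completes. This is a perfect complete field of characteristic $p$ with non-discrete valuation, so it is perfectoid. Its value group $\mathbb{Z}[1/2,1/p]$ is $2$-divisible, so (\ref{eqn:uinv}) and Lemma \ref{lem:uperf} give $u$-invariant $u(\kappa_p)=u$ without invoking tilting at all.
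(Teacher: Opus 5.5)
Your argument is essentially the paper's: it too combines \cite[Theorem 1.1]{karpenko2} with Theorem \ref{thm:uperf} and the tilting Remark \ref{rmk:tilt}, and the paper likewise takes the characteristic-uniformity you flag (a field of $u$-invariant $u$ in each odd characteristic $p$) implicitly from Karpenko's theorem. Your direct characteristic-$p$ construction, completing $\kappa_p^{\mathrm{perf}}((t))(t^{1/2^\infty},t^{1/p^\infty})$, is a valid way to bypass tilting; it is essentially the tilt of the paper's $K$.
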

\begin{proof}
    This follows from  \cite[Theorem 1.1]{karpenko2}, Theorem \ref{thm:uperf} and  Remark \ref{rmk:tilt}.
\end{proof}

\section*{Acknowledgements}
The authors acknowledge the support of the DAE, Government of India, under Project Identification No.~RTI4014. They would like to thank K{\k{e}}stutis {\v{C}}esnavi{\v{c}}ius, Nikita Karpenko, Sandeep Varma, Suresh Venapally  and Alexander Vishik for very helpful discussions and valuable suggestions.

\nocite*{}
\bibliographystyle{alpha}
\bibliography{ref}
\end{document}